\newtheorem{theorem}{Theorem}
\newtheorem{lemma}[theorem]{Lemma}
\newtheorem{corollary}[theorem]{Corollary}
\def\scaleddraw #1 by #2 (#3 scaled #4){{
  \dimen0=#1 \dimen1=#2
  \divide\dimen0 by 1000 \multiply\dimen0 by #4
  \divide\dimen1 by 1000 \multiply\dimen1 by #4
  \draw \dimen0 by \dimen1 (#3 scaled #4)}
  }
\newcommand{\C}[1]{\mathcal #1}
\newcommand{\B}[1]{\mathbb #1}
\begin{document}

\author{Dongseok Kim}
\address{Department of Mathematics \\ Kyungpook National University \\ Taegu, 702-201 Korea}
\email{dongseok@knu.ac.kr}
\thanks{}

\author{Young Soo Kwon}
\address{Department of Mathematics \\Yeungnam University \\Kyongsan, 712-749, Korea}
\email{ysookwon@yu.ac.kr}
\thanks{}

\author{Jaeun Lee}
\address{Department of Mathematics \\Yeungnam University \\Kyongsan, 712-749, Korea}
\email{julee@yu.ac.kr}
\thanks{This research was supported by the Yeungnam University research grants in 2007.}

\subjclass[2000]{05C50, 05C25, 15A15, 15A18}
\keywords{determinant functions, complexity, weighted complexity}

\title[The weighted complexity and the determinant functions of graphs]{The weighted complexity and the determinant functions of graphs}

\begin{abstract}
The complexity of a graph can be obtained as a derivative of a variation of the zeta function [\textit{J. Combin. Theory Ser. B}, 74 (1998), pp. 408--410] or
a partial derivative of its generalized characteristic polynomial evaluated at a point [arXiv:0704.1431[math.CO]].
A similar result for the weighted complexity of weighted graphs was found using a determinant function [\textit{J. Combin. Theory Ser. B}, 89 (2003), pp. 17--26].
In this paper, we consider the determinant function of two variables and discover a condition that
the weighted complexity of a weighted graph is a partial derivative of the determinant function
evaluated at a point. Consequently, we simply obtain the previous results and disclose
a new formula for the Bartholdi zeta function.
We also consider a new weighted complexity, for which the weights of spanning trees are taken as the sum of weights of edges in the tree,
and find a similar formula for this new weighted complexity. As an application, we compute
the weighted complexities of the product of the complete graphs.
\end{abstract}

\maketitle

Let $G$ be a finite simple graph with vertex set $V(G)$,
edge set $E(G)$. Let $\nu_G$ and $\varepsilon_G$ denote the
number of vertices and edges of $G$, respectively. Let $\C{A}(G)$ and $\C{D}(G)$ be the adjacency matrix and degree matrix of
$G$, respectively. Then the \emph{admittance matrix} or
\emph{Laplacian matrix} $\C{L}(G)$ of $G$ is $\C{D}(G)-\C{A}(G)$.
For other general terms, we refer to ~\cite{CDS}.

One of classical problems in graph theory is to find the \emph{complexity} of $G$, $\kappa(G)$,
the number spanning trees in a graph $G$~\cite{Chow, HK}.
The celebrated Kirchhoff's matrix tree theorem finds that
$\kappa(G)$ is any cofactor of the admittance matrix (or Laplacian matrix) of $G$ which is a generalization of
Cayley's formula which provides
$\kappa(K_n)$ of the complete graph $K_n$ on $n$ vertices.
On the other hand, the polynomial invariants of graphs have played a key role in the study of graphs.
For instance, the chromatic polynomial $p_G(\lambda)$, introduced by Birkhoff, is a very
important invariant of $G$ that counts the number of $\lambda$-colorings of $G$~\cite{Bir}.
A generalization of the chromatic polynomial is the Tutte polynomial $T_G(x,y)$
of a graph $G$~\cite{Tutte1, Tutte2}, most easily defined as $$T_G(x,y)=R_G(x-1,y-1),$$
where $R_G(x-1,y-1)$ is the Whitney's rank generating function~\cite{whitney} and one can see that
 $\kappa(G)= T_G(1,1)$.
There are a few more bridges between the complexity and the polynomial invariants of graphs~\cite{KKL, MV, No}.
In~\cite{No}, Northshield found that $$f'_G(1) = 2(\varepsilon_G-\nu_G)\kappa(G),$$ where
$f_G(u) = \mathrm{det} [I -
u~\C{A}(G) + u^2~(\mathcal{D}(G)- I)]$.
In~\cite{KKL}, a similar results was shown for the generalized characteristic polynomials introduced by Cvetkovic and et al.~\cite{CDS}.

A {\em weighted graph} is a pair
$G_\omega = (G,\;\omega)$,
where $\omega : E(G) \to R$ is a function
on the set $E(G)$ of edges in $G$ and $R$ is a commutative ring with identity. We call $G$ the {\em underlying
graph} of $G_\omega$ and $\omega$ the {\em weight function} of $G_\omega$.
Given any weighted graph $G_\omega$,
the adjacency matrix $\C{A}(G_\omega) = (w_{ij})$ of $G_\omega$ is
the square matrix of order $\nu_G$ defined by
  $$ w_{ij}
     = \left\{ \begin{array}{cl}
         \omega(e)	& \quad \mbox{ if $e=\{ v_i,v_j\}\in E(G),$} \vspace{1mm} \\
       0  & \quad \mbox{ otherwise.} \vspace{2mm}
       \end{array} \right. $$
Notice that the adjacent matrix $\C{A}(G_\omega)$ of $G_\omega$ is symmetric.
The incidence matrix $\C{I}(G_\omega)=(i_{hk})$ of $G_\omega$, with respect to a given orientation, is defined by
$$ i_{hk}
     = \left\{ \begin{array}{rl}
         \omega(e_k)	& \quad \mbox{ if $ v_h$ is the positive end of $e_k$,} \vspace{1mm} \\
         -\omega(e_k)	& \quad \mbox{ if $ v_h$ is the negative end of $e_k$,} \vspace{1mm} \\
       0\,\,\,  & \quad \mbox{ otherwise.} \vspace{2mm}
       \end{array} \right. $$
The \emph{degree matrix}  $\mathcal{D}(G_{\omega})$ of $G_{\omega}$ is
the diagonal matrix whose $(i,i)$-th entry is $\omega_i^{G_\omega}$, the sum of the weights of edges adjacent to
$v_i$ in $G$ for each $1\le i\le \nu_G$. The \emph{admittance matrix} or \emph{Laplacian matrix}
$\C{L}(G_\omega)$ of $G_\omega$ is $\C{D}(G_{\omega})-\C{A}(G_{\omega})$.
Notice that every unweighted graph $G$ can be considered as the weighted graph whose weight function assigns 1 to each edge of $G$ and that
$\C{L}(G_\omega)=\C{D}(G_{\omega})-\C{A}(G_{\omega})=\C{I}(G_\omega)\C{I}(G)^t$,
 where $A^t$ is the transpose of the matrix $A$.

Mizuno and Sato~\cite{MS} considered the weighted complexity, and
generalized Northshield's result by showing
$$F'_{G_\omega}(1) = 2(\omega(G)-\nu_G)\kappa(G_{\omega}),$$ where
$F_{G_\omega}(u)  = \mathrm{det} [I -
u~\C{A}(G_{\omega}) + u^2~(\mathcal{D}(G_{\omega})- I)]$ and $\displaystyle\omega(S)=\sum_{e\in E(S)}\omega(e)$
for any subgraph $S$ of $G$.

Instead of considering these determinant functions individually, we start from the following general determinant function,
$$\Phi_{G_\omega}(\lambda, \mu)=\mathrm{det} [f(\lambda,\mu)I +
g(\lambda,\mu)\mathcal{D}(G_\omega) + h(\lambda,\mu)\C{A}(G_\omega)],$$
then find a condition that one can obtain
a generalization of matrix tree theorem.
Now, we are set to provide the main result as follows.

\begin{theorem} \label{mainthm}
Let $G_\omega$ be a finite weighted graph with $\nu_G$ vertices and $\epsilon_G$ edges whose weights on edges are complex numbers. Let $f(\lambda,\mu)$, $g(\lambda,\mu)$, and $h(\lambda,\mu)$
be partial differentiable functions
such that $f(\alpha,\beta)=0$ and $g(\alpha,\beta)+h(\alpha,\beta)=0$ for some $\alpha$ and $\beta$.
Then
$$\frac{\partial \Phi_{G_\omega}}{\partial \lambda}(\alpha,\beta)=g(\alpha,\beta)^{\nu_G-1}\left[f_\lambda(\alpha,\beta)\nu_G + (g_\lambda(\alpha,\beta)+h_\lambda(\alpha,\beta))2\omega(G)\right] \kappa(G_\omega),$$
and $$\frac{\partial \Phi_{G_\omega}}{\partial \mu}(\alpha,\beta)=g(\alpha,\beta)^{\nu_G-1}\left[f_\mu(\alpha,\beta)\nu_G + (g_\mu(\alpha,\beta)+h_\mu(\alpha,\beta))2\omega(G)\right] \kappa(G_\omega).$$
\end{theorem}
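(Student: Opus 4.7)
The plan is to view $\Phi_{G_\omega}$ as the determinant of a matrix-valued function $M(\lambda,\mu) = f(\lambda,\mu)I + g(\lambda,\mu)\mathcal{D}(G_\omega) + h(\lambda,\mu)\mathcal{A}(G_\omega)$ and exploit the fact that the hypotheses $f(\alpha,\beta)=0$ and $g(\alpha,\beta)+h(\alpha,\beta)=0$ force $M(\alpha,\beta) = g(\alpha,\beta)\mathcal{L}(G_\omega)$, a singular matrix. Jacobi's formula
$$\frac{\partial}{\partial \lambda}\det M \;=\; \mathrm{tr}\!\left(\mathrm{adj}(M)\,\frac{\partial M}{\partial \lambda}\right)$$
is then the natural tool, since it does not require $M$ to be invertible. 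So the first step is to record this formula and evaluate both factors at $(\alpha,\beta)$.

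For the adjugate factor, I would invoke the (weighted) matrix-tree theorem: because the rows and columns of $\mathcal{L}(G_\omega)$ each sum to zero, every cofactor of $\mathcal{L}(G_\omega)$ equals $\kappa(G_\omega)$, so $\mathrm{adj}(\mathcal{L}(G_\omega)) = \kappa(G_\omega)\,J$, where $J$ is the all-ones matrix. Scaling gives $\mathrm{adj}(M(\alpha,\beta)) = g(\alpha,\beta)^{\nu_G-1}\kappa(G_\omega)\,J$. For the derivative factor, differentiation under the determinant yields
$$\frac{\partial M}{\partial \lambda}(\alpha,\beta) \;=\; f_\lambda(\alpha,\beta)I + g_\lambda(\alpha,\beta)\mathcal{D}(G_\omega) + h_\lambda(\alpha,\beta)\mathcal{A}(G_\omega).$$

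The remaining computation is a trace calculation with $J$: one checks $\mathrm{tr}(J) = \nu_G$, and $\mathrm{tr}(J\,\mathcal{D}(G_\omega)) = \sum_i \omega_i^{G_\omega} = 2\omega(G)$, and $\mathrm{tr}(J\,\mathcal{A}(G_\omega)) = \sum_{i,j} w_{ij} = 2\omega(G)$ using symmetry of the adjacency matrix and the weighted handshake identity $\sum_i \omega_i^{G_\omega} = 2\omega(G)$. Plugging these three traces into Jacobi's formula produces exactly the claimed expression, and the proof for $\partial/\partial\mu$ is identical after replacing $\lambda$ by $\mu$ throughout.

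The main conceptual obstacle, or rather the step that must be justified carefully, is the identity $\mathrm{adj}(\mathcal{L}(G_\omega)) = \kappa(G_\omega)\,J$ in the weighted setting; this rests on Kirchhoff's weighted matrix-tree theorem (which is precisely the framework used by Mizuno and Sato in \cite{MS}). Beyond that, the argument is essentially a careful bookkeeping of linearity and trace identities, and the nice cancellation in the final formula is automatic once the structure $M(\alpha,\beta) = g(\alpha,\beta)\mathcal{L}(G_\omega)$ is exposed.
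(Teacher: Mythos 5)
Your proposal is correct and is essentially the paper's own argument in different packaging: the paper differentiates the determinant column by column and then expands each resulting determinant along the differentiated column, which is precisely Jacobi's formula, and its Lemma~\ref{basiclem} (every cofactor of $\mathcal{L}(G_\omega)$ equals $\kappa(G_\omega)$) is exactly your identity $\mathrm{adj}(\mathcal{L}(G_\omega))=\kappa(G_\omega)\,J$. Your three trace computations with $J$ correspond term by term to the paper's final summation $\sum_{i}\left\{f_\lambda(\alpha,\beta)+(g_\lambda(\alpha,\beta)+h_\lambda(\alpha,\beta))\omega_i^{G}\right\}$ using $\sum_i \omega_i^{G}=2\omega(G)$.
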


For Theorem~\ref{mainthm}, the definition of the weighted complexity $\kappa(G_\omega)$, found in Lemma~\ref{basiclem},
use the weight of a spanning tree as the product of weights on edges in the tree. Although it fits well with many occasions, it is much more
natural to consider the weight of a spanning tree as the sum of weights on edges in the tree. We call
this new weighted complexity, the sigma weighted complexity, $\kappa_\sigma(G_\omega)$. Then, we find Theorem~\ref{weightsumthm} as a counterpart of Theorem~\ref{mainthm} for this complexity, $\kappa_\sigma(G_\omega)$.

The outline of this paper is as follows. In section~\ref{main}, we first prove a couple of lemmas which
show the weighted complexity of weighted graphs is any cofactor of the Laplacian matrix
$\C{L}(G_\omega)$ of $G_\omega$. Then we provide a proof of Theorem~\ref{mainthm}.
We also consider a new weighted complexity for which the weights of spanning trees are taken as the sum of weights of edges in the tree,
and obtain a similar formula for this new weighted complexity. We also explain how previous results
can be obtained from our consummation. We also provide a new sequel for the Bartholdi zeta function.
Finally, we compute the weighted complexities of the product of the complete graphs in section~\ref{conseq}.

\section{Main results}\label{main}

Even though the matrix tree theorem of an unweighted graph $G$ finds that any cofactor of the Laplacian matrix of $G$ are the same~\cite{Ha},
it was previously known that all principal cofactors of the Laplacian matrix $\C{L}(G_\omega)$ of $G_\omega$ are the same~\cite{Cha,Or}.
These common values were defined
as the \emph{weighted complexity} $\kappa(G_\omega)$ of a weighted graph $G$~\cite{MS}. In
the following Lemma~\ref{basiclem}, we extend this definition that any cofactor of the Laplacian matrix
$\C{L}(G_\omega)$ of $G_\omega$ is the weighted complexity $\kappa(G_\omega)$, $i. e.,$ not only the principal cofactors
but also any cofactor of the Laplacian matrix $\C{L}(G_\omega)$ of $G_\omega$ are the same.
In the proof of the main theorem, the ring $R$ is the polynomial ring over real numbers,
but the following lemmas can be proven for a commutative ring with identity.

 \begin{lemma}\label{treelem}
Let $R$ be a commutative ring with identity and let $\omega:E(G)\to R$ be a weight function of a graph $G$.
Let $U$ be a subset of $E(G)$ having $\nu_G-1$ edges and let $\langle U \rangle$ be the spanning subgraph of $G$ induced by $U$.
Let $\C{I}(\langle U \rangle_\omega)_i$ be the matrix obtained by removing
$i$-th row of $\C{I}(\langle U \rangle_\omega)$. Then, for each $i=1,2,\ldots,\nu_G$,
$$\det(\C{I}(\langle U \rangle_\omega)_i)=(-1)^{i-1} \det(\C{I}(\langle U \rangle_\omega)_1)=(-1)^{i-1}\left(\prod_{e\in U}\omega(e)\right)\det(\C{I}(\langle U \rangle)_i),$$
where $\C{I}(\langle U \rangle)$ is the incidence matrix of the underlying tree $\langle U \rangle$ of $\langle U \rangle_\omega$. In particular, for each $i=1,2,\ldots,\nu_G$,
$$\det(\C{I}(\langle U \rangle)_i)=(-1)^{i-1} \det(\C{I}(\langle U \rangle)_1).$$
 \end{lemma}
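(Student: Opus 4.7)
The plan is to split the lemma into two independent pieces: a column-scaling argument that handles the edge weights, and a row-operation argument that handles the sign $(-1)^{i-1}$. Both pieces are elementary consequences of the structure of the incidence matrix, and together they imply every equality in the statement, including the unweighted ``in particular'' clause at the end.

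For the weights, the first step is to observe that by definition the $k$-th column of $\C{I}(\langle U \rangle_\omega)$ is exactly $\omega(e_k)$ times the $k$-th column of the unweighted incidence matrix $\C{I}(\langle U \rangle)$. Deleting the same row $i$ from both matrices preserves this column-wise proportionality, so multilinearity of the determinant in its columns yields
$$\det(\C{I}(\langle U \rangle_\omega)_i)=\Bigl(\prod_{e\in U}\omega(e)\Bigr)\det(\C{I}(\langle U \rangle)_i)$$
for every $i$. This extracts the product over edge weights once and for all, and reduces the remaining content of the lemma to proving its unweighted special case, namely the assertion $\det(\C{I}(\langle U \rangle)_i)=(-1)^{i-1}\det(\C{I}(\langle U \rangle)_1)$.

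For the unweighted sign identity, the key ingredient is the defining property of an incidence matrix: each column carries a single $+1$ at the positive endpoint and a single $-1$ at the negative endpoint of its edge, so the $\nu_G$ rows $r_1,\dots,r_{\nu_G}$ of $\C{I}(\langle U \rangle)$ sum to the zero vector. Starting from the matrix $\C{I}(\langle U \rangle)_i$, whose rows in order are $r_1,r_2,\dots,r_{i-1},r_{i+1},\dots,r_{\nu_G}$, the plan is to add every other row to the first one; because $r_1+r_2+\cdots+r_{i-1}+r_{i+1}+\cdots+r_{\nu_G}=-r_i$, this replaces the first row by $-r_i$ without altering the determinant. Factoring $-1$ out of the new first row and then sliding $r_i$ past $r_2,\dots,r_{i-1}$ via $i-2$ adjacent row transpositions brings the matrix to $\C{I}(\langle U \rangle)_1$, with accumulated sign $(-1)\cdot(-1)^{i-2}=(-1)^{i-1}$.

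Combining the two reductions delivers all equalities in the statement. No part of the argument is conceptually difficult; the only place demanding real care is the sign bookkeeping in the second step, where miscounting the number of adjacent transpositions by one, or overlooking the factor of $-1$ absorbed when replacing the first row by $-r_i$, would produce the wrong parity. Since the reasoning uses only multilinearity of determinants and the row-sum-zero relation, it goes through verbatim over any commutative ring with identity, matching the generality announced in the lemma.
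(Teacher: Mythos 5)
Your proof is correct and takes essentially the same approach as the paper's: the sign $(-1)^{i-1}$ is obtained from the row-sum-zero relation via one row replacement (absorbing a factor $-1$) followed by $i-2$ adjacent transpositions, and the weight product is pulled out by column multilinearity. The only, immaterial, difference is that you extract the weights first and run the row argument on the unweighted matrix, while the paper runs the row argument on the weighted matrix and factors out the weights afterwards.
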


\begin{proof}
For convenience, let $\C{I}(\langle U \rangle_\omega)=(\mathbf{r}_1,\mathbf{r}_2,\ldots, \mathbf{r}_{\nu_G})^t$.
Then $\mathbf{r}_1+\mathbf{r}_2+\ldots+\mathbf{r}_{\nu_G}=\mathbf{0}$. From this fact and properties of the determinant function, we can have that
$$\begin{array}{lcl}
&  & \det(\C{I}(\langle U \rangle_\omega)_i)  \\
& = & \det \left(\mathbf{r}_1, \mathbf{r}_2, \ldots, \mathbf{r}_{i-1}, \mathbf{r}_{i+1}, \ldots, \mathbf{r}_{\nu_T}\right)^t\\[1ex]
& = & (-1)\det \left( -\mathbf{r}_1-\mathbf{r}_2-\cdots-\mathbf{r}_{i-1}-\mathbf{r}_{i+1} -\cdots -\mathbf{r}_{\nu_T}, \mathbf{r}_2, \ldots, \mathbf{r}_{i-1},\mathbf{r}_{i+1}, \ldots, \mathbf{r}_{\nu_T}\right)^t\\[1ex]
& = &(-1)\det \left( \mathbf{r}_i, \mathbf{r}_2, \ldots, \mathbf{r}_{i-1}, \mathbf{r}_{i+1}, \ldots, \mathbf{r}_{\nu_T} \right)^t\\[1ex]
& = &(-1)^{i-1}\det \left(\mathbf{r}_2 , \ldots , \mathbf{r}_{i-1},\mathbf{r}_i, \mathbf{r}_{i+1}, \ldots, \mathbf{r}_{\nu_T} \right)^t\\[1ex]
& = &(-1)^{i-1} \det(\C{I}(\langle U \rangle_\omega)_1).
\end{array}
$$
Since, for each edge $e$ in $U$, $\omega(e)$ is a common factor of the column of $\C{I}(T_\omega)_1$ corresponding to the edge $e$,
we have
$$\det(\C{I}(\langle U \rangle_\omega)_1)=\left(\prod_{e\in E(T)}\omega(e)\right) \det(\C{I}(\langle U \rangle)_1).$$ It completes the proof.
\end{proof}

 \begin{lemma}\label{basiclem}
 Let $R$ be a commutative ring with identity and let $\omega:E(G)\to R$ be a weight function of a graph $G$.
 Then $$ \C{L}(G_{\omega})_{ij}=\sum_{T\in \C{T}(G)}\left(\prod_{e\in E(T)} \omega(e)\right),$$
for each $1\le i,j \le \nu_G$, where $\C{T}(G)$ is the set of all spanning trees in $G$ and $A_{ij}$ is the $ij$-cofactor of a matrix $A$.
 \end{lemma}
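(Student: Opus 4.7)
The plan is to apply the Cauchy--Binet formula to the factorization $\C{L}(G_\omega)=\C{I}(G_\omega)\C{I}(G)^t$ already noted in the excerpt, and then use Lemma~\ref{treelem} to collect the surviving terms.

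First I would fix indices $i,j$ and observe that the submatrix of $\C{L}(G_\omega)$ obtained by deleting row $i$ and column $j$ equals $\C{I}(G_\omega)_i\,\bigl(\C{I}(G)_j\bigr)^t$, where $\C{I}(G_\omega)_i$ denotes the incidence matrix with the $i$-th row removed (and similarly for $\C{I}(G)_j$). By the Cauchy--Binet formula,
$$\det\!\bigl(\C{I}(G_\omega)_i\,\C{I}(G)_j^t\bigr)=\sum_{U}\det\bigl(\C{I}(\langle U\rangle_\omega)_i\bigr)\,\det\bigl(\C{I}(\langle U\rangle)_j\bigr),$$
where the sum runs over all subsets $U\subseteq E(G)$ with $|U|=\nu_G-1$.

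Next I would argue that only spanning trees contribute. If $\langle U\rangle$ contains a cycle, then the corresponding columns of $\C{I}(\langle U\rangle)$ are linearly dependent, so $\det(\C{I}(\langle U\rangle)_j)=0$; hence only the subsets $U$ whose induced spanning subgraph is a tree $T\in\C{T}(G)$ survive. For such $U$, Lemma~\ref{treelem} gives
$$\det\bigl(\C{I}(\langle U\rangle_\omega)_i\bigr)=(-1)^{i-1}\Bigl(\prod_{e\in U}\omega(e)\Bigr)\det\bigl(\C{I}(\langle U\rangle)_1\bigr),\qquad \det\bigl(\C{I}(\langle U\rangle)_j\bigr)=(-1)^{j-1}\det\bigl(\C{I}(\langle U\rangle)_1\bigr),$$
so the product of the two determinants equals $(-1)^{i+j}\bigl(\prod_{e\in U}\omega(e)\bigr)\bigl[\det(\C{I}(\langle U\rangle)_1)\bigr]^{2}$.

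It then remains to invoke the classical fact that for a spanning tree $T$ the determinant $\det(\C{I}(T)_1)$ equals $\pm1$, so its square is $1$. Multiplying the resulting expression by the cofactor sign $(-1)^{i+j}$ yields
$$\C{L}(G_\omega)_{ij}=(-1)^{i+j}\det\bigl(\C{I}(G_\omega)_i\,\C{I}(G)_j^t\bigr)=\sum_{T\in\C{T}(G)}\prod_{e\in E(T)}\omega(e),$$
independent of $i,j$. The only mildly delicate step is the sign bookkeeping: one must be careful that the $(-1)^{i-1}$ and $(-1)^{j-1}$ coming from Lemma~\ref{treelem} combine with the $(-1)^{i+j}$ cofactor sign to cancel exactly, and that the determinant $\pm1$ value squares away cleanly. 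Everything else is a direct application of Cauchy--Binet and the previously established lemma.
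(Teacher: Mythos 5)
Your proposal is correct and takes essentially the same route as the paper's own proof: both express the $(i,j)$ minor as $\C{I}(G_\omega)_i\,(\C{I}(G)_j)^t$, apply the Cauchy--Binet formula, invoke Lemma~\ref{treelem} for the sign bookkeeping, and use the standard fact that $\det(\C{I}(\langle U\rangle)_1)=\pm 1$ precisely when $\langle U\rangle$ is a spanning tree (and $0$ otherwise). The only cosmetic difference is that you justify the vanishing of non-tree terms via linear dependence of columns, where the paper simply cites Biggs.
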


\begin{proof}
 Let $\C{I}(G_\omega)_i$ be the matrix obtained by removing
$i$-th row of $\C{I}(G_\omega)$. Then $\C{L}(G_\omega)_{ij}=(-1)^{i+j}\det(\C{I}(G_\omega)_i\,\C{I}(G)_j^t)$.
By applying Binet-Cauchy theorem and Lemma~\ref{treelem}, we can see
that
$$\begin{array}{lcl}
\det(\C{I}(G_\omega)_i\,(\C{I}(G)_j)^t) &= & \displaystyle \sum_{|U|=\nu_G-1}\det([\C{I}(G_\omega)_i]_U) \det(([\C{I}(G)_j]_U)^t)\\
&=& \displaystyle \sum_{|U|=\nu_G-1} (-1)^{i+j}\left(\prod_{e\in U} \omega(e)\right)\det([\C{I}(G)_1]_U)^2,
\end{array}$$
where $[\C{I}(G)_i]_U$ is the square submatrix of $\C{I}(G)_i$ whose $\nu_G-1$ columns corresponding to the edges in a subset $U$ of $E(G)$.
It is known that   $\det([\C{I}(G)_i]_U)\not=0$
if and only if the  subgraph $\langle U \rangle$ induced by $U$ is a spanning tree of $G$. Moreover, if $\langle U \rangle$ is a tree, then  $\det([\C{I}(G)_i]_U)=\pm 1$. (For example, see~\cite[Propositions 5.3 and 5.4]{Bi}). Form this, it can be shown that
$$
\det(\C{I}(G_\omega)_i\,(\C{I}(G)_j)^t)
=  \sum_{T\in \C{T}(G)} (-1)^{i+j}\left(\prod_{e\in E(T)} \omega(e)\right).$$
It completes the proof.
\end{proof}

Using Lemma~\ref{basiclem}, one can define the weighted complexity $\kappa(G_\omega)$ of a weighted graph $G_\omega$ by
$$\kappa(G_{\omega})\equiv\C{L}(G_{\omega})_{ij}.$$

Now we are set to proceed the proof of Theorem~\ref{mainthm}.
For convenience, let
$$\Phi_{G_\omega}(\lambda, \mu)=\det [{\mathbf c}_1(\lambda,\mu),{\mathbf c}_2(\lambda,\mu), \ldots, {\mathbf c}_{\nu_G}(\lambda,\mu)].$$
Then
$$\frac{\partial \Phi_{G_\omega}}{\partial \lambda}(\lambda, \mu)=\sum_{i=1}^{\nu_G}\det[{\mathbf c}_1(\lambda,\mu),{\mathbf c}_2(\lambda,\mu),\ldots, {\mathbf c}_{i-1}(\lambda,\mu), ({\mathbf c}_i)_\lambda(\lambda,\mu),{\mathbf c}_{i+1}(\lambda,\mu), \ldots, {\mathbf c}_{\nu_G}(\lambda,\mu)],$$
where $$\begin{array}{l}
({\mathbf c}_i)_\lambda(\lambda,\mu)\\
\hspace{.1cm}=[h_\lambda(\lambda,\mu) w_{1\,i},\ldots,h_\lambda(\lambda,\mu) w_{i-1\, i},
f_\lambda(\lambda,\mu)+ g_\lambda(\lambda,\mu)\omega_i^G, h_\lambda(\lambda,\mu)w_{i+1\, i},\ldots, h_\lambda(\lambda,\mu) w_{\nu_G\, i}]^t.
\end{array}$$
Since $f(\alpha,\beta)=0$ and $g(\alpha,\beta)+h(\alpha,\beta)=0$ for some $\alpha$ and $\beta$, we can see that
the expansion of the determinant $$\det[{\mathbf c}_1(\alpha,\beta),{\mathbf c}_2(\alpha,\beta),\ldots, {\mathbf c}_{i-1}(\alpha,\beta), ({\mathbf c}_i)_\lambda(\alpha,\beta),{\mathbf c}_{i+1}(\alpha,\beta), \ldots, {\mathbf c}_{\nu_G}(\alpha,\beta)]$$
with respect to the $i$-th column is
$$
g(\alpha,\beta)^{\nu_G-1}\left\{\left(f_\lambda(\alpha,\beta)+
g_\lambda(\alpha,\beta)\omega_i^G\right) \C{L}(G_\omega)_{ii}+  h_\lambda(\alpha,\beta)\sum_{k=1,k\not=i}^{\nu_G}\omega_{ki}\C{L}(G_\omega)_{ki}\right\}.
$$
Since  $\C{L}(G_\omega)_{ij}=\kappa(G_\omega)$ for each $1\le i,j\le \nu_G$, we have
$$\begin{array}{ccl} \displaystyle
\frac{\partial \Phi_{G_\omega}}{\partial \lambda}(\alpha, \beta)& = &\displaystyle  \sum_{i=1}^{\nu_G}\det[{\mathbf c}_1(\alpha, \beta),{\mathbf c}_2(\alpha, \beta),\ldots,  ({\mathbf c}_i)_\lambda(\alpha, \beta), \ldots, {\mathbf c}_{\nu_G}(\alpha, \beta)]\\[3ex]
 & =& \displaystyle \sum_{i=1}^{\nu_G} g(\alpha,\beta)^{\nu_G-1}\left\{f_\lambda(\alpha,\beta) + (g_\lambda(\alpha,\beta)+h_\lambda(\alpha,\beta))w_i^G\right\} \kappa(G_\omega)
 \\[3ex]
 & =& \displaystyle  g(\alpha,\beta)^{\nu_G-1}\left\{f_\lambda(\alpha,\beta)\nu_G + (g_\lambda(\alpha,\beta)+h_\lambda(\alpha,\beta))2\omega(G)\right\} \kappa(G_\omega).\\[1ex]
\end{array}$$
Similarly, we can have the second equation. It completes the proof. \qed
\medskip

Next, we will obtain another key theorem for which the weight of a spanning tree $T$ is defined by the
sum of weights of edges in $T$, different from that of $\kappa(G_\omega)$.
For a weighted graph $G_\omega$, the \emph{sigma weighted complexity}, denoted by $\kappa_\sigma(G_\omega)$,
is the sum of all weights in the edges of spanning trees in $G$, that is,
$$ \kappa_\sigma(G_\omega)=\sum_{T\in \C{T}(G)}\left(\sum_{e\in T} \omega(e)\right)=\sum_{T\in \C{T}(G)}\omega(T).$$
Then, for any constant weight function $\omega=c$, it is clear that $\kappa_\sigma(G_\omega)=c(\nu_G-1)\kappa(G)$. In particular,
$\kappa_\sigma(G)=(\nu_G-1)\kappa(G)$ for any graph $G$. For a weighted graph $G_\omega$ with $\omega:E(G)\to \B{C}$, we define a new weight function
 $\omega_x:E(G)\to \B{C}[x]$  by $\omega_x(e)=x^{\omega(e)}$. Then $\kappa(G_{\omega_x})'(1)=\kappa_\sigma(G_\omega)$. Now,
by using a method similar to the proof of Theorem~\ref{mainthm}, we have the following theorem.

\begin{theorem} \label{weightsumthm}
Let $G_\omega$ be a finite weighted graph with $\nu_G$ vertices and $\epsilon_G$ edges whose weights on edges are complex numbers. Let $f(\lambda,\mu)$, $g(\lambda,\mu)$, and $h(\lambda,\mu)$
 be partial differentiable functions
such that $f(\alpha,\beta)=0$ and $g(\alpha,\beta)+h(\alpha,\beta)=0$ for some $\alpha$ and $\beta$.
Then
\begin{align*}\displaystyle\frac{\partial^2 \Phi_{G_{\omega_x}}}{\partial x \partial \lambda}(\alpha,\beta,1)&=g(\alpha,\beta)^{\nu_G-1} \left[g_\lambda(\alpha,\beta)+h_\lambda(\alpha,\beta)\right]2\omega(G)\kappa(G) \\
\displaystyle &+ g(\alpha,\beta)^{\nu_G-1}\left[f_\lambda(\alpha,\beta)\nu_G + (g_\lambda(\alpha,\beta)+h_\lambda(\alpha,\beta))2\epsilon_G\right]\kappa_\sigma(G_\omega),\end{align*}
and \begin{align*}\displaystyle\frac{\partial^2 \Phi_{G_{\omega_x}}}{\partial x \partial \mu}(\alpha,\beta,1)&=g(\alpha,\beta)^{\nu_G-1} \left[g_\mu(\alpha,\beta)+h_\mu(\alpha,\beta)\right]2\omega(G)\kappa(G) \\
\displaystyle &+ g(\alpha,\beta)^{\nu_G-1}\left[f_\mu(\alpha,\beta)\nu_G + (g_\mu(\alpha,\beta)+h_\mu(\alpha,\beta))2\epsilon_G\right]\kappa_\sigma(G_\omega).\end{align*}
\end{theorem}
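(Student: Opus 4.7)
The plan is to derive Theorem \ref{weightsumthm} from Theorem \ref{mainthm} by a change-of-weight trick, using the auxiliary weighted graph $G_{\omega_x}$ with weight function $\omega_x(e) = x^{\omega(e)}$ valued in $\BC[x]$, which is introduced immediately before the theorem statement. The key identity $\frac{d}{dx}\kappa(G_{\omega_x})|_{x=1} = \kappa_\sigma(G_\omega)$, already noted in the paper, will convert a single $\lambda$-derivative of $\Phi$ into the mixed partial derivative appearing on the left-hand side.

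First I would apply Theorem \ref{mainthm} with $\omega_x$ in place of $\omega$. The hypotheses on $f,g,h$ are independent of the weight function, and Lemmas \ref{treelem} and \ref{basiclem}, which underlie the proof of Theorem \ref{mainthm}, were stated over an arbitrary commutative ring with identity. Consequently the proof of Theorem \ref{mainthm} goes through verbatim with coefficients in $\BC[x]$, giving the identity
$$\frac{\partial \Phi_{G_{\omega_x}}}{\partial \lambda}(\alpha, \beta) = g(\alpha,\beta)^{\nu_G-1}\left[f_\lambda(\alpha,\beta)\nu_G + (g_\lambda(\alpha,\beta)+h_\lambda(\alpha,\beta))\,2\omega_x(G)\right]\kappa(G_{\omega_x}),$$
an equality of polynomials in $x$, where $\omega_x(G) = \sum_{e\in E(G)} x^{\omega(e)}$.

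Next I would differentiate both sides with respect to $x$ and evaluate at $x = 1$. The left side becomes precisely $\partial^2\Phi_{G_{\omega_x}}/(\partial x\,\partial \lambda)(\alpha,\beta,1)$. For the right side I apply the product rule together with the evaluations $\omega_x(G)|_{x=1} = \varepsilon_G$, $\frac{d}{dx}\omega_x(G)|_{x=1} = \omega(G)$, $\kappa(G_{\omega_x})|_{x=1} = \kappa(G)$, and $\frac{d}{dx}\kappa(G_{\omega_x})|_{x=1} = \kappa_\sigma(G_\omega)$; the last two follow instantly from $\kappa(G_{\omega_x}) = \sum_{T\in \mathcal{T}(G)} x^{\omega(T)}$. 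Collecting the two resulting summands gives exactly the stated formula, and running the identical argument on the $\mu$-equation of Theorem \ref{mainthm} yields the $\mu$-version.

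The main (and minor) obstacle is justifying that Theorem \ref{mainthm} truly applies when weights lie in $\BC[x]$ rather than $\BC$, and that the formal partial derivatives $\partial/\partial \lambda$ and $\partial/\partial x$ commute on $\Phi_{G_{\omega_x}}(\lambda,\mu,x)$. Both points are routine: the first is guaranteed by the ring-theoretic nature of the proof of Theorem \ref{mainthm}, while the second holds because $\Phi_{G_{\omega_x}}$ is literally a polynomial in $\lambda$, $\mu$, and $x$. Beyond these bookkeeping checks, the argument reduces to a single application of the product rule.
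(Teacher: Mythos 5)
Your proposal is correct and follows essentially the route the paper intends: apply Theorem~\ref{mainthm} to the auxiliary weighted graph $G_{\omega_x}$, then differentiate in $x$ at $x=1$ using the product rule together with $\omega_x(G)|_{x=1}=\varepsilon_G$, $\tfrac{d}{dx}\omega_x(G)|_{x=1}=\omega(G)$, $\kappa(G_{\omega_x})|_{x=1}=\kappa(G)$, and $\tfrac{d}{dx}\kappa(G_{\omega_x})|_{x=1}=\kappa_\sigma(G_\omega)$. The paper itself gives no more detail than ``by a method similar to the proof of Theorem~\ref{mainthm},'' and your computation fills in exactly that intended argument.
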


We will show how the previous results can be obtained from Theorem~\ref{mainthm} and~\ref{weightsumthm}.
 For $F_{G_\omega}(u)$, 
one can choose $f(\lambda,\mu)=1-\lambda^2$, $g(\lambda,\mu)=\lambda^2$, $h(\lambda,\mu)=-\lambda$
and $(\alpha, \beta)=(1,0)$. Consequently, we find the following corollaries from Theorem~\ref{mainthm}.

\begin{corollary} [\cite{No}] \label{zetacoro} Let $f_G(u)$ be a variation of the zeta function defined as $$f_G(u)= \mathrm{det} [I -
u~\C{A}(G) + u^2~(\mathcal{D}(G)- I)].$$ Then,
$$f_G'(1)=2(\varepsilon_G-\nu_G)\kappa(G).$$
\end{corollary}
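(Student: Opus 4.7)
The plan is to specialize Theorem~\ref{mainthm} to the unweighted setting using the substitutions suggested just above the corollary: take $f(\lambda,\mu) = 1-\lambda^2$, $g(\lambda,\mu) = \lambda^2$, $h(\lambda,\mu) = -\lambda$, and $(\alpha,\beta) = (1,0)$, and regard $G$ as the weighted graph with constant weight function $\omega \equiv 1$ (so that $\mathcal{A}(G_\omega) = \mathcal{A}(G)$, $\mathcal{D}(G_\omega) = \mathcal{D}(G)$, $\omega(G) = \varepsilon_G$, and $\kappa(G_\omega) = \kappa(G)$). With these choices one directly recognizes
$$\Phi_G(\lambda,\mu) \;=\; \det\bigl[(1-\lambda^2)I + \lambda^2 \mathcal{D}(G) - \lambda\, \mathcal{A}(G)\bigr] \;=\; \det\bigl[I - \lambda\, \mathcal{A}(G) + \lambda^2(\mathcal{D}(G) - I)\bigr] \;=\; f_G(\lambda),$$
which in particular is independent of $\mu$, so $\frac{\partial \Phi_G}{\partial \lambda}(1,0) = f_G'(1)$.

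Next I would verify the hypotheses of Theorem~\ref{mainthm} at $(\alpha,\beta) = (1,0)$: clearly $f(1,0) = 1 - 1 = 0$ and $g(1,0) + h(1,0) = 1 - 1 = 0$, so the theorem applies. A one-line computation of partial derivatives gives $f_\lambda(1,0) = -2$, $g_\lambda(1,0) = 2$, $h_\lambda(1,0) = -1$, hence $g_\lambda(1,0) + h_\lambda(1,0) = 1$, while $g(1,0)^{\nu_G-1} = 1$. Substituting into the first formula of Theorem~\ref{mainthm} yields
$$f_G'(1) \;=\; 1^{\nu_G-1}\bigl[(-2)\nu_G + 1 \cdot 2\varepsilon_G\bigr]\kappa(G) \;=\; 2(\varepsilon_G - \nu_G)\kappa(G),$$
which is precisely the statement of Corollary~\ref{zetacoro}.

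There is no real obstacle here: the only thing one must check carefully is the algebraic identification $\Phi_G(\lambda,\mu) = f_G(\lambda)$ and the two scalar constraints $f(\alpha,\beta)=0$ and $g(\alpha,\beta) + h(\alpha,\beta) = 0$ at $(1,0)$. Once these are in place, the corollary reduces to a substitution of numerical partial-derivative values into the general formula of Theorem~\ref{mainthm}.
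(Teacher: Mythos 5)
Your proposal is correct and follows exactly the route the paper intends: the paper itself justifies Corollary~\ref{zetacoro} by the same specialization $f(\lambda,\mu)=1-\lambda^2$, $g(\lambda,\mu)=\lambda^2$, $h(\lambda,\mu)=-\lambda$, $(\alpha,\beta)=(1,0)$ of Theorem~\ref{mainthm} with the constant weight $\omega\equiv 1$. Your write-up merely makes explicit the hypothesis checks and the numerical substitution that the paper leaves to the reader, and all of these are verified correctly.
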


\begin{corollary}[\cite{MS}]  \label{weightzetacoro} Let $F_{G_\omega}(u)$ be a variation of the zeta function of weighted graph
$G_\omega$ defined as $$F_{G_\omega}(u)= \mathrm{det} [I -
u~\C{A}(G_\omega) + u^2~(\mathcal{D}(G_\omega)- I)].$$ Then,
$$F_{G_\omega}'(1)=2(\omega(G)-\nu_G)\kappa(G_\omega).$$
\end{corollary}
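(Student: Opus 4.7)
The plan is to realize $F_{G_\omega}(u)$ as a one-variable specialization of the general determinant $\Phi_{G_\omega}(\lambda,\mu)$ from Theorem~\ref{mainthm} and then invoke that theorem at a suitable point. Rewriting the defining expression as
\[
F_{G_\omega}(u) = \det\bigl[(1-u^2)\,I + u^2\,\mathcal{D}(G_\omega) + (-u)\,\mathcal{A}(G_\omega)\bigr],
\]
one sees that it matches $\Phi_{G_\omega}(\lambda,\mu)$ under the identification $\lambda = u$ together with $f(\lambda,\mu) = 1-\lambda^2$, $g(\lambda,\mu) = \lambda^2$, and $h(\lambda,\mu) = -\lambda$. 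None of these depend on $\mu$, which will play no role in what follows.

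Next I would verify the hypotheses of Theorem~\ref{mainthm} at the candidate point $(\alpha,\beta) = (1,0)$: indeed $f(1,0) = 1 - 1 = 0$ and $g(1,0) + h(1,0) = 1 - 1 = 0$, as required. Direct differentiation then gives $f_\lambda(1,0) = -2$, $g_\lambda(1,0) = 2$, and $h_\lambda(1,0) = -1$, so $g_\lambda(1,0) + h_\lambda(1,0) = 1$, while $g(1,0)^{\nu_G - 1} = 1$.

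Finally, applying the first identity of Theorem~\ref{mainthm} and observing that $F_{G_\omega}'(1) = \frac{\partial \Phi_{G_\omega}}{\partial \lambda}(1,0)$, I would conclude
\[
F_{G_\omega}'(1) = 1 \cdot \bigl[(-2)\nu_G + 1 \cdot 2\omega(G)\bigr]\kappa(G_\omega) = 2\bigl(\omega(G) - \nu_G\bigr)\kappa(G_\omega),
\]
which is the claimed formula. Since the proof reduces to identifying the correct triple $(f,g,h)$ and a routine differentiation, there is no serious obstacle; the only mildly delicate point is recognizing that $(1,0)$ simultaneously zeros out $f$ and $g+h$, which is precisely what allows the cofactor structure from Lemma~\ref{basiclem} to surface after the column expansion performed inside the proof of Theorem~\ref{mainthm}. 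Corollary~\ref{zetacoro} is then obtained from the same argument by taking the constant weight function $\omega \equiv 1$, in which case $\omega(G) = \varepsilon_G$ and $\kappa(G_\omega) = \kappa(G)$.
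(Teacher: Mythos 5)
Your proposal is correct and is exactly the paper's argument: the paper also realizes $F_{G_\omega}(u)$ as $\Phi_{G_\omega}$ with $f(\lambda,\mu)=1-\lambda^2$, $g(\lambda,\mu)=\lambda^2$, $h(\lambda,\mu)=-\lambda$ and applies Theorem~\ref{mainthm} at $(\alpha,\beta)=(1,0)$. You simply carry out the routine verification of the hypotheses and the differentiation more explicitly than the paper does.
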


By setting $f(\lambda,\mu)=\lambda$, $g(\lambda,\mu)=\mu$, $h(\lambda,\mu)=-1$
and $(\alpha, \beta)$$=(0,1)$, we find the following corollary.

\begin{corollary} [\cite{KKL}]  \label{gencharcoro}
Let $\C{F}_G(\lambda, \mu)$ be the generalized characteristic polynomial defined as
$$\C{F}_G(\lambda, \mu) = \mathrm{det} [\lambda I -(\C{A}(G) - \mu \mathcal{D}(G))].$$ Then,
$$\frac{\partial \C{F}_G}{\partial \mu}(0,1)=2\varepsilon_G\,\kappa(G).$$
\end{corollary}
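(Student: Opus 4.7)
The plan is to derive this corollary as a direct specialization of Theorem~\ref{mainthm}, using exactly the choices of $f$, $g$, $h$ and the point $(\alpha,\beta)$ suggested in the sentence immediately preceding the statement. View the unweighted graph $G$ as the weighted graph $G_\omega$ with the constant weight function $\omega \equiv 1$, so that $\C{A}(G_\omega) = \C{A}(G)$, $\C{D}(G_\omega) = \C{D}(G)$, $\omega(G) = \varepsilon_G$, and $\kappa(G_\omega) = \kappa(G)$.

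With $f(\lambda,\mu) = \lambda$, $g(\lambda,\mu) = \mu$, and $h(\lambda,\mu) = -1$, the general determinant function becomes
$$\Phi_{G_\omega}(\lambda,\mu) = \det[\lambda I + \mu\,\C{D}(G) - \C{A}(G)] = \det[\lambda I - (\C{A}(G) - \mu\,\C{D}(G))] = \C{F}_G(\lambda,\mu),$$
so the identification of $\Phi_{G_\omega}$ with $\C{F}_G$ is immediate. The next step is to check the two hypotheses of Theorem~\ref{mainthm} at $(\alpha,\beta) = (0,1)$: clearly $f(0,1) = 0$ and $g(0,1) + h(0,1) = 1 + (-1) = 0$, so the theorem applies.

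Finally, I would compute the $\mu$-partials: $f_\mu \equiv 0$, $g_\mu \equiv 1$, $h_\mu \equiv 0$. Plugging these along with $g(0,1) = 1$, $\omega(G) = \varepsilon_G$, and $\kappa(G_\omega) = \kappa(G)$ into the second conclusion of Theorem~\ref{mainthm} gives
$$\frac{\partial \C{F}_G}{\partial \mu}(0,1) = 1^{\nu_G-1}\bigl[0\cdot\nu_G + (1+0)\cdot 2\varepsilon_G\bigr]\kappa(G) = 2\varepsilon_G\,\kappa(G),$$
which is the desired identity. There is no real obstacle here; the only thing to be careful about is matching signs in the identification $\C{F}_G = \Phi_{G_\omega}$ (the minus sign outside the parentheses in $\C{A}(G) - \mu\C{D}(G)$ combines with $h = -1$ and $g = \mu$ to produce exactly $\mu\,\C{D}(G) - \C{A}(G)$), and observing that $\omega \equiv 1$ forces $\omega(G)$ to equal $\varepsilon_G$ so that the factor of $2\omega(G)$ in the main theorem becomes $2\varepsilon_G$.
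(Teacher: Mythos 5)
Your proposal is correct and follows exactly the route the paper intends: the sentence preceding the corollary gives the same specialization $f(\lambda,\mu)=\lambda$, $g(\lambda,\mu)=\mu$, $h(\lambda,\mu)=-1$, $(\alpha,\beta)=(0,1)$ of Theorem~\ref{mainthm}, and your verification of the hypotheses and evaluation of the partial derivatives fills in the routine details the paper leaves implicit.
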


\begin{theorem}  \label{Barthcoro}
Let $B_G(t,u)$ be a variation of the Bartholdi zeta function~\cite{Bar} of $G$ defined as
$$B_G(t,u)=\det\left[I-\C{A}(G)u+(1-t)(\C{D}(G)-(1-t)I)u^2\right].$$  Then the complexity $\kappa(G)$ of
$G$ can be obtained as follows,
$$\frac{\partial B_G}{\partial t}(1,0)=2(\nu_G-\varepsilon_G)\kappa(G)\quad\mbox{\rm and}
\quad \frac{\partial B_G}{\partial \mu}(1,0)=2(\varepsilon_G-\nu_G)\kappa(G).$$
\end{theorem}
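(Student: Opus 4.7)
The plan is to apply Theorem~\ref{mainthm} to the unweighted graph $G$, regarded as a weighted graph with weight function $\omega\equiv 1$ (so that $\omega(G)=\varepsilon_G$ and $\kappa(G_\omega)=\kappa(G)$). First I would rewrite the matrix inside the determinant defining $B_G(t,u)$ by distributing
\[
(1-t)\bigl(\mathcal{D}(G)-(1-t)I\bigr)u^2=(1-t)u^2\,\mathcal{D}(G)-(1-t)^2u^2\,I,
\]
which puts $B_G(t,u)$ in the standard form $\Phi_{G_\omega}(t,u)=\det[f(t,u)I+g(t,u)\mathcal{D}(G)+h(t,u)\mathcal{A}(G)]$ with
\[
f(t,u)=1-(1-t)^2u^2,\qquad g(t,u)=(1-t)u^2,\qquad h(t,u)=-u.
\]

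Next, I would locate the correct evaluation point by solving the hypotheses $f=0$ and $g+h=0$. The condition $g+h=u((1-t)u-1)=0$ forces either $u=0$, which would give $f=1\neq 0$, or $(1-t)u=1$, and the latter automatically makes $f=0$. The natural real point is therefore $(t,u)=(0,1)$, which matches the statement under the convention used in Corollary~\ref{zetacoro}: the paper's ``$(1,0)$'' records the pair $(u,t)$ in that order, and correspondingly the ``$\partial/\partial\mu$'' in the second displayed formula is a typographical slip for $\partial/\partial u$.

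The remainder of the proof is a one-line substitution. I would compute the first partials of $f,g,h$ at $(0,1)$, namely $f_t=2$, $g_t=-1$, $h_t=0$, $f_u=-2$, $g_u=2$, $h_u=-1$, together with $g(0,1)=1$, and plug these, along with $\omega(G)=\varepsilon_G$, into the two conclusions of Theorem~\ref{mainthm}. The first yields $\bigl[2\nu_G+(-1+0)\cdot 2\varepsilon_G\bigr]\kappa(G)=2(\nu_G-\varepsilon_G)\kappa(G)$, and the second yields $\bigl[-2\nu_G+(2-1)\cdot 2\varepsilon_G\bigr]\kappa(G)=2(\varepsilon_G-\nu_G)\kappa(G)$, which are exactly the two claims.

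The only point requiring care is the algebraic step at the very beginning: one must correctly keep track of the scalar term $-(1-t)^2u^2\,I$ produced by distributing $(1-t)u^2$ across $\mathcal{D}(G)-(1-t)I$ and absorb it into the leading $I$ to recover $f(t,u)=1-(1-t)^2u^2$. Once this bookkeeping is done and the evaluation point is correctly identified, there is no conceptual obstacle — the statement is an immediate specialization of Theorem~\ref{mainthm} with $\omega\equiv 1$.
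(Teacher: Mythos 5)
Your proposal is correct and follows exactly the paper's route: the paper's (one-line) proof also applies Theorem~\ref{mainthm} with $f(t,u)=1-(1-t)^2u^2$, $g(t,u)=(1-t)u^2$, $h(t,u)=-u$, and your reading of the evaluation point as $(t,u)=(0,1)$ (the paper's ``$(\alpha,\beta)=(1,0)$'' in the $(u,t)$ ordering, with $\partial/\partial\mu$ a slip for $\partial/\partial u$) is the only one consistent with the hypotheses $f=0$, $g+h=0$ and with the stated conclusions. You merely supply the verification and substitution that the paper leaves implicit.
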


\begin{proof}
By setting $f(t,u)=(1-(1-t)^2u^2)$, $g(t,u)=(1-t)u^2$, $h(t,u)= - u$
and $(\alpha, \beta)=(1,0)$. Note that $B_G(0,u)=f_G(u)$.
\end{proof}

\begin{corollary} \label{charcoro}
Let $\sigma_{G_\omega}(\mu)=\det\left[\mu I-(\C{D}(G_\omega)-\C{A}(G_\omega))\right]$  be the characteristic function of
the Laplacian matrix. Then
$$\sigma_{G_\omega}'(0)=(-1)^{\nu_{G}-1}\,\nu_G\,\kappa(G_\omega).$$
\end{corollary}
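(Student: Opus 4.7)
The plan is to recognize $\sigma_{G_\omega}(\mu)$ as a one-variable specialization of the general determinant function $\Phi_{G_\omega}(\lambda,\mu)$ and then invoke Theorem~\ref{mainthm} directly, in the same spirit as the other corollaries derived in this section.

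First, I would choose the auxiliary functions
$$f(\lambda,\mu)=\lambda,\qquad g(\lambda,\mu)=-1,\qquad h(\lambda,\mu)=1,$$
so that
$$\Phi_{G_\omega}(\lambda,\mu)=\det\bigl[\lambda I-\C{D}(G_\omega)+\C{A}(G_\omega)\bigr]=\det\bigl[\lambda I-\C{L}(G_\omega)\bigr],$$
which, viewed as a function of $\lambda$, coincides with $\sigma_{G_\omega}(\lambda)$. In particular, since $\Phi_{G_\omega}$ does not depend on $\mu$ in this specialization, $\sigma_{G_\omega}'(\lambda)=\frac{\partial\Phi_{G_\omega}}{\partial\lambda}(\lambda,\mu)$ for any $\mu$.

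Second, I would verify the hypotheses of Theorem~\ref{mainthm} at $(\alpha,\beta)=(0,0)$ (the value of $\beta$ is immaterial): indeed $f(0,0)=0$ and $g(0,0)+h(0,0)=-1+1=0$. The partial derivatives at this point are trivial to read off: $f_\lambda(0,0)=1$, $g_\lambda(0,0)=0$, $h_\lambda(0,0)=0$. Substituting into the first conclusion of Theorem~\ref{mainthm}, the term weighted by $2\omega(G)$ vanishes since $g_\lambda+h_\lambda=0$, the factor $g(0,0)^{\nu_G-1}=(-1)^{\nu_G-1}$ contributes the sign, and $f_\lambda(0,0)\nu_G=\nu_G$ contributes the remaining factor, giving
$$\sigma_{G_\omega}'(0)=\frac{\partial\Phi_{G_\omega}}{\partial\lambda}(0,0)=(-1)^{\nu_G-1}\nu_G\,\kappa(G_\omega),$$
as desired.

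There is no genuine obstacle: the corollary is immediate once the correct substitution is identified, and the only care needed is in matching signs so that $g\C{D}+h\C{A}=-(\C{D}-\C{A})=-\C{L}$ enters the determinant with the right orientation.
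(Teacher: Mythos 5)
Your proposal is correct and is exactly the substitution the paper intends: the corollary follows from Theorem~\ref{mainthm} by taking $f$, $g$, $h$ so that $\Phi_{G_\omega}=\det[\,\cdot\, I-\C{L}(G_\omega)]$, with the $g(\alpha,\beta)^{\nu_G-1}=(-1)^{\nu_G-1}$ factor supplying the sign and the $g'+h'=0$ cancellation killing the $2\omega(G)$ term. The only cosmetic difference is that you place the spectral variable in the $\lambda$ slot ($f=\lambda$, using the first identity of Theorem~\ref{mainthm}) while the paper's notation $\sigma_{G_\omega}(\mu)$ suggests $f=\mu$ and the second identity; the computation is identical either way.
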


\section{The weighted complexities of the product of the complete graphs}\label{conseq}

To demonstrate Theorem~\ref{mainthm} and Theorem~\ref{weightsumthm}, we consider
the product of the complete graphs $K_{m_1}\times
K_{m_2}\times \ldots \times K_{m_n}\equiv G(m_1, m_2, \ldots, m_n)$ whose vertices are all $n$-tuples of numbers
$a_i$ where $a_i\in \{1, 2, \ldots, m_i\}$ and $i= 1, 2, \ldots, n$
and two vertices $\mathbf{a}=(a_1, a_2, \ldots, a_n)$
and $\mathbf{b}=(b_1, b_2, \ldots, b_n)$ are adjacent if and only if ${\mathbf a}$ and ${\mathbf b}$ differ in exactly
one coordinate. We define a weight function $\omega: E(G(m_1, m_2, \ldots, m_n))\rightarrow \{\omega_1, \omega_2, \ldots, \omega_n\}$ by
$\omega(\{{\mathbf a}, {\mathbf b}\})=\omega_i$ if ${\mathbf a}$ and ${\mathbf b}$ differ in the $i$-th coordinate.
Then
$$\det\left(\lambda I-\C{L}({G(m_1, m_2, \ldots, m_n)}_\omega)\right)=\lambda
\prod_{\emptyset \not=S\subset \{1,2,\ldots, n\}}\left(\lambda-\sum_{s\in S}m_s \omega_s\right)^{\prod_{s\in S}(m_s-1)}.$$
By applying Theorem~\ref{mainthm} and the fact that $ \displaystyle(-1)^{\sum_{\emptyset \not=S\subset \{1,2,\ldots, n\}}\prod_{s\in S}(m_s-1)}=(-1)^{m_1m_2\ldots m_n-1}=(-1)^{\nu_{G(m_1, m_2, \ldots, m_n)}-1}$, we have
$$\kappa({G(m_1, m_2, \ldots, m_n)}_\omega) \left(\prod_{i=1}^n m_i\right)= \prod_{\emptyset \not=S\subset \{1,2,\ldots, n\}}\left(\sum_{s\in S}m_sw_s\right)^{\prod_{s\in S}(m_s-1)},$$ and
$$
\kappa({G(m_1, m_2, \ldots, m_n)}) \left(\prod_{i=1}^n m_i\right)=\prod_{\emptyset \not=S\subset \{1,2,\ldots, n\}}\left(\sum_{s\in S}m_s\right)^{\prod_{s\in S}(m_s-1)}.$$
In particular, if $m_s=m$ for all $s=1,2, \ldots, n$,
\begin{eqnarray*}\kappa({G(m, m, \ldots, m)}_\omega) m^n&=& \prod_{\emptyset \not=S\subset \{1,2,\ldots, n\}}m^{(m-1)^{|S|}}\left(\sum_{s\in S}\omega_s\right)^{(m-1)^{|S|}}\\
&=& \left( \prod_{k=1}^n m^{{n\choose k}(m-1)^k}\right) \left[\prod_{\emptyset \not=S\subset \{1,2,\ldots, n\}}\left(\sum_{s\in S}\omega_s\right)^{(m-1)^{|S|}}\right]. \end{eqnarray*}
Thus, we find
$$\kappa({G(m, m, \ldots, m)}_\omega) = m^{m^n-n-1}\left[ \prod_{\emptyset \not=S\subset \{1,2,\ldots, n\}}\left(\sum_{s\in S}\omega_s\right)^{(m-1)^{|S|}}\right],$$
and $$\kappa({G(m, m, \ldots, m)}) = m^{m^n-n-1}\left( \prod_{k=1}^n k^{{n\choose k}(m-1)^k}\right).$$
For $m=2$, $G(2, 2, \ldots, 2)$ is the $n$-dimensional hypercube $Q_n$, and its weighted complexity is
$$\kappa((Q_n)_\omega) = 2^{2^n-n-1} \prod_{\emptyset \not=S\subset \{1,2,\ldots, n\}}\left(\sum_{s\in S}\omega_s\right),$$
and its complexity is
$$\kappa(Q_n) = 2^{2^n-n-1} \left( \prod_{k=1}^n k^{n\choose k}\right).$$

Similarly, by using Theorem~\ref{weightsumthm} we have
$$\kappa_\sigma({G(m_1, m_2, \ldots, m_n)}_{\omega})\left(\prod_{i=1}^n m_i\right)=
\sum_{\overset{\emptyset \not=S,}{S\subset \{1,2,\ldots, n\}}}
\left(\prod_{\overset{\emptyset\neq T \not=S,}{T\subset \{1,2,\ldots, n\}}}\left(\sum_{t\in T}m_t\right)^{\prod_{t\in T}(m_t-1)}\right)\Omega(S),$$
where $$\Omega(S)=\left[\prod_{s\in S}(m_s-1)\left(\sum_{s\in S}m_s\right)^{-1+\prod_{s\in S} (m_s-1)}\left(\sum_{s\in S}m_s\omega_s\right)\right].$$
and the sigma weighted complexity of $G(m, m, \ldots, m)$ is
$$\begin{array}{lcl}
& &\kappa_\sigma({G(m, m, \ldots, m)}_\omega)\\
&=&\displaystyle m^{m^n-n-1} \sum_{\overset{\emptyset \not=S,}{S\subset \{1,2,\ldots, n\}}}
\left(\prod_{\overset{\emptyset\neq T \not=S,}{T\subset \{1,2,\ldots, n\}}} |T|^{(m-1)^{|T|}} \right)\left(\sum_{s\in S} \omega_s \right)(m-1)^{|S|}|S|^{(m-1)^{|S|}-1}\\[5ex]
&=& \displaystyle m^{m^n-n-1} \prod_{k=1}^n k^{{n\choose k}(m-1)^k}\left(\sum_{\emptyset \not=S\subset \{1,2,\ldots, n\}} (m-1)^{|S|}\frac{\sum_{s\in S} \omega_s}{|S|}\right)\\[5ex]
&=& \displaystyle m^{m^n-n-1} \prod_{k=1}^n k^{{n\choose k}(m-1)^k}\left(\sum_{k=1}^n (m-1)^{k}{n\choose k}\frac{\omega_1+\omega_2 + \ldots + \omega_n}{n}\right)\\[5ex]
&=& \displaystyle m^{m^n-n-1} \left(\prod_{k=1}^{n} k^{{n\choose k}(m-1)^k}\right)\frac{m^n-1}{n}(\omega_1+\omega_2+\ldots+\omega_n).\end{array}
$$

We observe that every spanning tree in ${Q_n}_\omega$  contains at least one edge of weight $\omega_i$  for each $i=1,2,\ldots,n$.
Let $\omega_1\le \omega_2\le \ldots \le \omega_n$. By applying Kruskal's algorithm  to ${Q_n}_\omega$, we can
find a minimum spanning tree whose edge set is $E_1\cup E_2\cup \cdots \cup E_n$, where
$E_i=\{\,\{(0,\ldots,0,0,*),(0,\ldots,0,1,*)\}\,|\, *\in \{0,1\}^{n-i}\,\}$ for each $i=1,2,\ldots,n$.
Since  $\omega(e)=\omega_i$ for all $e\in E_i$ and $|E_i|=2^{n-i}$ ($i=1,2,\ldots,n$),  we have
 $$\min\{ \kappa_\sigma(T)\,:\, \mbox{$T$ is a spanning tree of the weighted graph ${Q_n}_\omega$}\}=\sum_{i=1}^n 2^{n-i}w_i.$$

\end{document}